\title{Small faces in stationary Poisson hyperplane tessellations}
\author{Rolf Schneider}
\date{}
\newcommand{\R}{{\mathbb R}}
\newcommand{\K}{{\mathcal K}}
\newcommand{\bP}{{\mathbb P}}
\newcommand{\cP}{{\mathcal P}}
\newcommand{\N}{{\mathbb N}}
\newcommand{\Ha}{\mathcal{H}}
\newcommand{\B}{\mathcal{B}}
\newcommand{\D}{{\rm d}}
\newcommand{\bE}{{\mathbb E}\,}
\newcommand{\bQ}{{\mathbb Q}}
  \renewcommand{\dim}{{\rm dim}\,}
\newtheorem{theorem}{Theorem}
\newtheorem{lemma}{Lemma}
\newtheorem{definition}{Definition}
\begin{document}
\maketitle

\begin{abstract}
We consider the tessellation induced by a stationary Poisson hyperplane process in $d$-dimensional Euclidean space. Under a suitable assumption on the directional distribution, and measuring the $k$-faces of the tessellation by a suitable size functional, we determine a limit distribution for the shape of the typical $k$-face, under the condition of small size and this tending to zero. The limit distribution is concentrated on simplices. This extends a result of Gilles Bonnet.\\[1mm]
{\bf Keywords:} Poisson hyperplane tessellation, typical face, small size, limit distribution, simplex shape\\[1mm]
{\bf Mathematics Subject Classification:} Primary 60D05, Secondary 52C22
\end{abstract}

\section{Introduction}\label{sec1}

A stationary Poisson hyperplane process in $\R^d$ gives rise to a tessellation of $\R^d$ into convex polytopes. The polytopes in such a Poisson hyperplane tessellation pose a considerable number of geometrically interesting questions. First, one may notice that generally the variety of the appearing cells (the $d$-dimensional polytopes) is very rich: under mild assumptions on the directional distribution of the hyperplane process (which are satisfied, for example, in the isotropic case), the translates of the cells in the tessellation are a.s. dense (with respect to the Hausdorff metric) in the space of convex bodies in $\R^d$. Further, with probability one, every combinatorial type of a simple $d$-polytope appears infinitely often among the cells of the tessellation (see \cite{RS16}). The latter result can be strengthened: from `infinitely often' to `with positive density', see \cite{Sch18}. A different type of questions comes up if one asks for average cells, formalized by the notions of the `zero cell' and the `typical cell'. A well-known question of D.G. Kendall concerned an isotropic Poisson line tessellation in the plane and asked whether the conditional law for the shape of the zero cell, given its area, converges weakly, if the area tends to infinity, to the degenerate law concentrated at the circular shape. Proofs were given by Kovalenko \cite{Kov97, Kov99}. This led to detailed investigations into the relations between large size and shape, for zero cells and typical cells, in higher dimensions, for non-isotropic tessellations, and for general (axiomatized) notions of size. We refer to \cite{HRS04a, HRS04b, HS07a, HS07b}. The extension from cells to lower-dimensional faces revealed new aspects, since the shape of a large face is influenced not only by its size, but also by its direction; see \cite{HS10, HS11}.

Small cells, on the other side, have been studied with less intensity, so far. If one looks at a sufficiently large simulation of a stationary, isotropic Poisson line process in the plane, one will notice that the very small cells are mostly triangles, but, of course, of varying shapes and orientations. In his heuristic approach to Kendall's problem, Miles \cite{Mil95} also briefly considered small cells. We quote his conclusion: ``Thus it may be said that the `small' polygons of ${\mathscr P}$ are triangles of random shape, the corresponding shape distribution being immediately dependent on the conditioning characteristic determining `smallness'. This contrasts strongly with the `large' polygons (\dots), for which shape distributions are characteristically degenerate, even though evidently dependent on the particular conditioning variable.'' This concerns the isotropic case. Poisson line processes with only two directions of the lines, where no triangles can be produced, were investigated by Beermann, Redenbach, and Th\"ale \cite{BRT14}. They showed that the asymptotic shape of cells with small area is degenerate, whereas the asymptotic shape of cells with small perimeter is indeterminate. The first investigation of small cells in higher dimensions is due to Bonnet \cite{Bon17}. Under various assumptions on the directional distribution of the hyperplane process, he found different asymptotic behavior for small cells, including results on the speed of convergence. If the spherical directional distribution of the hyperplane process is absolutely continuous with respect to spherical Lebesgue measure, he established that the distribution of the shape of the typical cell, under the condition of small size and this size tending to zero, converges to a distribution on the space of simplex shapes, which was represented explicitly in terms of the directional distribution and the employed size measurement. The present note extends this result in two directions. First, the assumption of absolute continuity is weakened: the optimal assumption seems to be that the spherical directional distribution assigns measure zero to each great subsphere. Second, and more important, we consider also lower-dimensional faces and extend Bonnet's result to the typical $k$-face of the tessellation, for $k\in\{1,\dots,d\}$. This requires, first, to generalize a representation of the distribution of the typical cell (Theorem 10.4.6 in \cite{SW08}, in the isotropic case), involving the inball center as a center function, to typical $k$-faces and to more general directional distributions. The result is Theorem \ref{T4.1} in Section \ref{sec4}. Then Theorem \ref{T5.1} in Section \ref{sec5} extends Bonnet's result to the typical $k$-face.

\section{Preliminaries}\label{sec2}

We fix some notation. We work in $\R^d$ ($d\ge 2$), the $d$-dimensional real vector space with origin $o$, scalar product $\langle\cdot\,,\cdot\rangle$ and induced norm $\|\cdot\|$. The unit ball of $\R^d$ is denoted by $B^d$, the unit sphere by $S^{d-1}$, and the Lebesgue measure on $\R^d$ by $\lambda_d$. By $\Ha$ we denote the space of hyperplanes in $\R^d$, with its usual topology. For $K\subset \R^d$, we denote by
$$ [K]_\Ha= \{H\in\Ha:H\cap K\not=\emptyset\}$$
the set of hyperplanes meeting $K$. Every hyperplane in $\Ha$ has a representation 
$$ H(u,\tau)= \{x\in\R^d: \langle x,u\rangle=\tau\}$$
with $u\in S^{d-1}$ and $\tau\in\R$, and we have $H(u,\tau)=H(u',\tau')$ if and only if $(u,\tau)=(\epsilon u',\epsilon\tau')$ with $\epsilon\in\{-1,1\}$. The vectors $\pm u$ are the unit normal vectors of the hyperplane $H(u,\tau)$, and usually one of them is chosen arbitrarily as `the' normal vector of the hyperplane. The set
$$ H^-(u,\tau) = \{x\in\R^d:\langle x,u\rangle\le \tau\}$$
is one of the closed halfspaces bounded by $H(u,\tau)$. If $H$ is a hyperplane and $x\in\R^d$ a point not in $H$, we denote by $H^-_x$ the closed halfspace bounded by $H$ that contains $x$. 

The space $\K$ of convex bodies (nonempty, compact, convex subsets) of $\R^d$ is equipped with the Hausdorff metric. The subspace $\cP$ of polytopes is a Borel set in $\K$. Generally for a topological space $T$, we denote by $\B(T)$ the $\sigma$-algebra of Borel sets in $T$. 

In the following, $\widehat X$ will always be a non-degenerate stationary Poisson hyperplane process in $\R^d$. We refer to \cite{SW08}, in particular Sections 3.2, 4.4. and 10.3, for the basic notions and fundamental facts about such processes. We denote the underlying probability by $\bP$, and expectation by $\bE$. The intensity measure $\widehat\Theta$ of $\widehat X$, defined by
$$ \widehat\Theta (A) := \bE \widehat X(A),\quad A\in\B(\Ha),$$
has a decomposition
\begin{equation}\label{2.1}
\int_{\Ha} f\,\D\widehat\Theta = \widehat\gamma\int_{S^{d-1}} \int_{\infty}^\infty f(H(u,\tau))\,\D\tau\,\varphi(\D u)
\end{equation}
for every nonnegative, measurable function $f$ on $\Ha$; see \cite[(4.33)]{SW08}. Here $\widehat\gamma>0$ is the {\em intensity} of $\widehat X$ and $\varphi$ is an even Borel probability measure on $S^{d-1}$, called the {\em spherical directional distribution} of $\widehat X$.

For a convex body $K\in\K$, the distribution of the number of hyperplanes of $\widehat X$ hitting $K$ is given by
$$ \bP\left(\widehat X([K]_\Ha)=n\right) =e^{-\widehat\Theta([K]_\Ha)} \frac{\widehat\Theta([K]_\Ha)^n}{n!},\quad n\in\N_0.$$
It follows from (\ref{2.1}) that 
$$ \widehat\Theta([K]_\Ha) =\widehat\gamma \int_{S^{d-1}}\int_{-\infty}^\infty {\mathbbm 1}\{H(u,\tau)\cap K\not=\emptyset\}\,\D\tau\,\varphi(\D u) = 2\widehat\gamma \Phi(K)$$
with
$$ \Phi(K) =\int_{S^{d-1}} h(K,u)\,\varphi(\D u),$$
where $h(K,u)= \max\{\langle x,u\rangle:x\in K\}$ defines the support function $h(K,\cdot)$ of $K$. (By ${\mathbbm 1}$ we always denote an indicator function.) For obvious reasons, we call $\Phi$ the {\em hitting functional} of $\widehat X$.

For convenience, we often identify a simple counting measure with its support, so we write $H\in\widehat X(\omega)$ for $\widehat X(\omega)(\{H\})=1$. In particular, Campbell's theorem for $\widehat X$ (and for $f$ as above) is written in the form
\begin{equation}\label{2.2}
\bE \sum_{H\in\widehat X} f(H) = \int_{\Ha} f\,\D\widehat\Theta.
\end{equation}

The following notions of general position play a role for hyperplane processes.

\begin{definition}\label{D2.1}
A system of hyperplanes in $\R^d$ is in {\em translationally general position} if every $k$-dimensional plane of $\R^d$ is contained in at most $d-k$ hyperplanes of the system, for $k=0,\dots,d-1$.

A system of hyperplanes in $\R^d$ is in {\em directionally general position} if any $d$ of the hyperplanes of the system have linearly independent normal vectors.

Translationally general position together with directionally general position is called {\em general position}.
\end{definition}

While the hyperplanes of $\widehat X$ a.s. have the first property, they have the second property only under an assumption on its spherical directional distribution.

\begin{definition}\label{D2.2}
A measure on $S^{d-1}$ is called {\em subspace-free} if it is zero on each great subsphere of $S^{d-1}$.
\end{definition}

In the proof below, and later, we denote by $\widehat X^m_{\not=}$ the set of all ordererd $m$-tuples of pairwise distinct hyperplanes from $\widehat X$. 

\begin{lemma}\label{L2.1}
Let $\widehat X$ be nondegenerate, stationary Poisson hyperplane process in $\R^d$.\\[1mm]
$\rm (a)$ 
The hyperplanes of $\widehat X$ are a.s. in translational general position.\\[1mm]
$\rm (b)$ If the spherical directional distribution $\varphi$ of $\widehat X$ is subspace-free, then a.s. the hyperplanes of $\widehat X$ are in directionally general position.
\end{lemma}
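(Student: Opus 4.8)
The plan is to prove both parts by the same mechanism: express the relevant ``bad'' configuration as a property of a finite tuple of hyperplanes, bound the expected number of such tuples by the multivariate version of Campbell's theorem (\ref{2.2}) for the Poisson process $\widehat X$ (see \cite{SW08}), and use the decomposition (\ref{2.1}) to split the intensity measure into the angular part $\varphi$ and the translation part $\D\tau$. In each case the aim is to show that the resulting expectation is zero; since the number of bad tuples is a nonnegative integer-valued random variable, a vanishing expectation forces it to be zero a.s.

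For (a), fix $k\in\{0,\dots,d-1\}$ and put $m=d-k+1$. If some $k$-plane is contained in $d-k+1$ hyperplanes of $\widehat X$, then some tuple in $\widehat X^m_{\not=}$ has a common $k$-plane, so it suffices to show that the expected number of such $m$-tuples is zero. By the multivariate Campbell formula this expectation equals $\int_{\Ha^m}\mathbbm{1}\{H_1,\dots,H_m \text{ share a common } k\text{-plane}\}\,\D\widehat\Theta^m$, which by (\ref{2.1}) becomes $\widehat\gamma^m\int_{(S^{d-1})^m}\int_{\R^m}\mathbbm{1}\{\cdots\}\,\D\tau\,\varphi^m(\D u)$. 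I would fix the directions $u_1,\dots,u_m$ and show the inner $\tau$-integral vanishes. If $x$ lies in $H(u_i,\tau_i)$ for all $i$, then every linear relation $\sum_i a_iu_i=0$ forces $\sum_i a_i\tau_i=0$; hence, when the intersection is nonempty, it is an affine subspace of dimension $d-r$ with $r=\dim\lin\{u_1,\dots,u_m\}$. A common $k$-plane forces $d-r\ge k$, i.e. $r\le m-1$, so the normals are linearly dependent; but then a nontrivial relation $\sum_i a_iu_i=0$ confines $(\tau_1,\dots,\tau_m)$ to a proper linear subspace of $\R^m$, of Lebesgue measure zero. (If instead $r=m$, the intersection has dimension $d-m=k-1<k$ and the indicator is identically zero.) Thus the inner integral is zero for every choice of directions, the expectation vanishes, and since each of the finitely many events (one per $k$) has probability zero, (a) follows.

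For (b) the obstruction is purely directional and imposes no condition on the $\tau_i$, so I would first localize. Let $N_n$ be the number of tuples in $\widehat X^d_{\not=}$ whose hyperplanes all meet the ball $nB^d$ and have linearly dependent normals. Since $H(u,\tau)\cap nB^d\ne\emptyset$ iff $|\tau|\le n$, each $\tau$-integral contributes the factor $2n$, and the multivariate Campbell formula with (\ref{2.1}) gives $\bE N_n=\widehat\gamma^d(2n)^d\,\varphi^d(D)$, where $D=\{(u_1,\dots,u_d)\in(S^{d-1})^d:\det(u_1,\dots,u_d)=0\}$. If $\varphi^d(D)=0$, then $N_n=0$ a.s. for every $n$, and letting $n\to\infty$ shows that a.s. no $d$ hyperplanes of $\widehat X$ have linearly dependent normals, which is (b).

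It remains to prove the key fact that $\varphi^d(D)=0$ when $\varphi$ is subspace-free; this is the heart of the matter and the step I expect to require the hypothesis essentially. I would show by induction on $j$ that the set $D_j\subset(S^{d-1})^j$ of linearly dependent $j$-tuples satisfies $\varphi^j(D_j)=0$ for $1\le j\le d$. The case $j=1$ is trivial, as $D_1=\emptyset$. For the step, Fubini writes $\varphi^j(D_j)$ as the integral over $(u_1,\dots,u_{j-1})$ of $\varphi(\{u_j:(u_1,\dots,u_j)\in D_j\})$. On the set where $(u_1,\dots,u_{j-1})\in D_{j-1}$ the inner integrand is at most $1$, but that region has $\varphi^{j-1}$-measure zero by the inductive hypothesis; on its complement the vectors $u_1,\dots,u_{j-1}$ span a subspace $W$ of dimension $j-1\le d-1$, and $\{u_j:(u_1,\dots,u_j)\in D_j\}=W\cap S^{d-1}$ is a great subsphere, hence $\varphi$-null by subspace-freeness. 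Both contributions vanish, completing the induction and with it the proof. The delicate point is precisely this reduction to the subspace-free hypothesis; everything else is routine bookkeeping with the Campbell formula and (\ref{2.1}).
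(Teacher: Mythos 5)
Your proposal is correct, and its skeleton is the same as the paper's: control the expected number of ``bad'' tuples via the multivariate Campbell formula for $\widehat X^m_{\not=}$ combined with the decomposition (\ref{2.1}), and show that the resulting integral vanishes. In part (a) your execution is a minor variant: where the paper restricts to tuples whose intersection has dimension exactly $k$, renumbers so that the first $d-k$ hyperplanes already meet in dimension $k$, and integrates out the last translation variable, you keep the event that the $m=d-k+1$ hyperplanes contain a common $k$-plane (the literal negation of translational general position) and observe that a nontrivial relation $\sum_i a_iu_i=0$, depending only on the directions, confines $(\tau_1,\dots,\tau_m)$ to a hyperplane of $\R^m$; either way the inner $\tau$-integral vanishes for every fixed tuple of directions. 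In part (b) you genuinely deviate, and to your advantage. The paper takes $A$ to be the set of $(d+1)$-tuples ``with linearly dependent normal vectors''---read literally this is all of $\Ha^{d+1}$, and the sets $A_i$ of its decomposition are not $\varphi$-null when the remaining $d$ normals span $\R^d$, so the printed argument really only goes through after replacing $d+1$ by $d$, which is what you do (and which matches Definition \ref{D2.1}). Moreover, since directional degeneracy imposes no constraint on the $\tau_i$, the translation integrals are infinite; the paper leaves this to the tacit convention $\infty\cdot 0=0$, whereas your localization to hyperplanes meeting $nB^d$ keeps every expectation finite. Finally, your induction on $j$ establishing $\varphi^d(D)=0$ plays the role of the paper's decomposition $A=\bigcup_i A_i$ and makes explicit exactly where subspace-freeness enters. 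The paper's route is shorter; yours is watertight at precisely the two points where the printed proof is shaky.
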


\begin{proof}
Let $m\in\N$ and $A\in\B(\Ha^m)$. We apply \cite{SW08}, Theorem 3.1.3 and Corollary 3.2.4, and then (\ref{2.1}), to obtain
\begin{eqnarray}\label{3.2.0}
&& \bE \sum_{(H_1,\dots,H_m)\in \widehat X^m_{\not=}} {\mathbbm 1}_A(H_1,\dots,H_m)\\
&& = \int_{\Ha^m} {\mathbbm 1}_A(H_1,\dots,H_m)\,\widehat\Theta^m(\D(H_1,\dots,H_m)) \nonumber\\
&& = \widehat\gamma^m \int_{(S^{d-1})^m} \int_{-\infty}^\infty \cdots  \int_{-\infty}^\infty {\mathbbm 1}_A(H(u_1,\tau_1),\dots,H(u_m,\tau_m))\,\D\tau_1\cdots\D\tau_m\,\varphi^m(\D(u_1,\dots,u_m)). \nonumber
\end{eqnarray}

\vspace{2mm}

\noindent(a) Let $k\in \{0,\dots,d-1\}$ be given. Define $A\in\B(\Ha^{d-k+1})$ by
$$ A:= \{(H_1,\dots,H_{d-k+1})\in\Ha^{d-k+1}: \dim(H_1\cap\dots\cap H_{d-k+1})=k\}.$$
Suppose that, for fixed $u_1,\dots,u_{d-k+1}$ and suitable $\tau_1,\dots,\tau_{d-k+1}$,
$$ \dim(H(u_1,\tau_1)\cap\dots\cap H(u_{d-k+1},\tau_{d-k+1})) = k.$$
Then we can assume, after renumbering, that
$$ \dim(H(u_1,\tau_1)\cap\dots\cap H(u_{d-k},\tau_{d-k})) = k,$$
and we have
$$ H(u_1,\tau_1)\cap\dots\cap H(u_{d-k},\tau_{d-k})\cap H(u_{d-k+1},\tau) =\emptyset$$
for all $\tau$ except one value. Therefore
$$ \int_{-\infty}^\infty {\mathbbm 1}_A(H(u_1,\tau_1),\dots,H(u_{d-k},\tau_{d-k}), H(u_{d-k+1},\tau))\,\D\tau=0.$$
Relation (\ref{3.2.0}) (with $m=d-k+1$) gives the assertion. 

\vspace{2mm}

\noindent(b) Suppose that $\varphi$ is subspace-free. Let $A$ be the set of all $(d+1)$-tuples $(H_1,\dots,H_{d+1})\in\Ha^{d+1}$ with linearly dependent normal vectors. Then $A=\bigcup_{i=1}^{d+1} A_i$, where $A_i$ is the set of all $(d+1)$-tuples $(H_1,\dots,H_{d+1})\in\Ha^{d+1}$ for which the normal vector of $H_i$ is in the linear hull of the normal vectors of the remaining hyperplanes. For fixed $u_1,\dots,u_{d},\tau_1,\dots,\tau_{d+1}$ we have
$$ \int_{S^{d-1}} {\mathbbm 1}_{A_{d+1}}(H(u_1,\tau_1),\dots, H(u_{d},\tau_{d}),H(u_{d+1},\tau_{d+1}))\,\varphi(\D u_{d+1})=0,$$
since the integrand is zero unless $u_{d+1}$ lies in some fixed $(d-1)$-dimensional great subsphere; similarly for $A_1,\dots,A_{d}$. Now relation (\ref{3.2.0}) for $m=d+1$, together with Fubini's theorem, gives the assertion. 
\end{proof}

\vspace{3mm}

The stationary Poisson hyperplane process $\widehat X$ induces a tessellation $X$ of $\R^d$ into convex polytopes. In particular, for each $k\in\{0,\dots,d\}$ it induces the particle process $X^{(k)}$ of its $k$-faces (thus, $X^{(d)}=X$ is the process of its `cells'). 

Generally, let $Y$ be a stationary particle process in a Borel subset $\K'$ of $\K$. A {\em center function} on $\K'$ is a measurable map $c:\K'\to\R^d$ satisfying $c(K+t)=c(K)+t$ for $t\in\R^d$ and $c(K)=o \Rightarrow c(\alpha K)=o$  if $\alpha\ge 0$, for all $K\in\K'$. An often used center function is the circumcenter (the center of the smallest ball containing the set), but later we shall need a different one, not defined on all of $\K$. We define $\K'_c:= \{K\in \K':c(K)=o\}$. If the intensity measure $\Theta=\bE Y$ satisfies $\Theta\not=0$ and $\Theta(\{K\in\K': 
K'\cap C\not=\emptyset\})<\infty$ for every compact subset $C\subset\R^d$, then there are a unique number $\gamma>0$ and a unique probability measure $\bQ$ on $\K'_c$ such that
\begin{equation}\label{2.3}
\int_{\K'}f\,\D\Theta = \gamma\int_{\K_c'}\int_{\R^d} f(K+x)\,\lambda_d(\D x)\,\bQ(\D K)
\end{equation}
for every nonnegative, measurable function $f$ on $\K'$. We refer to \cite[Section 4.1]{SW08}, but point out that the definition of a center function has slightly been changed. The measure $\bQ$, called the {\em grain distribution} of $Y$, depends on the choice of the center function, although this is not shown in the notation. The intensity $\gamma$ is independent of the choice of the center function, as follows, e.g., from \cite{SW08}, Theorem 4.1.3(b) with $\varphi\equiv 1$. Campbell's theorem now reads 
\begin{equation}\label{2.4}
\bE \sum_{K\in Y} f(K) = \int_{\K'} f\,\D\Theta.
\end{equation}
Let $A\in \B(\K')$ and $B\in\B(\R^d)$ with $0<\lambda_d(B)<\infty$. Applying (\ref{2.3}) and (\ref{2.4}) with $f(K)= {\mathbbm 1}_A(K-c(K)){\mathbbm 1}_B(c(K))$, we get
\begin{equation}\label{2.5} 
\gamma\bQ(A) =\frac{1}{\lambda_d(B)}\, \bE \sum_{K\in X,\,c(K)\in B} {\mathbbm 1}_A(K-c(K)).
\end{equation}

For the process $X^{(k)}$ of $k$-faces of the tessellation $X$ induced by $\widehat X$, we denote the intensity by $\gamma^{(k)}$ and the intensity measure by $\bQ^{(k)}$. This intensity measure satisfies the assumption made above (as follows, e.g., from \cite[Thm. 10.3.4]{SW08}), hence the previous results can be applied. By $\cP_k\subset \K$ we denote the set of $k$-dimensional polytopes; then $\cP_k$ is a Borel subset of $\K$. Since $X^{(k)}$ is a particle process in $\cP_k$, for its description we need only center functions on $\cP_k$. If such a center function is chosen, we denote by $Z^{(k)}$ the {\em typical $k$-face} of $X$, which is defined as a random polytope with distribution $\bQ^{(k)}$. Thus, for $A\in\B(\cP_k)$ and $B\in\B(\R^d)$ with $\lambda_d(B)=1$, we have
\begin{equation}\label{2.6}
\gamma^{(k)}\bQ^{(k)}(A) =\bE\sum_{K\in X^{(k)}} {\mathbbm 1}_A(K-c(K)){\mathbbm 1}_B(c(K)).
\end{equation}

\section{The incenter}\label{sec3}

Later arguments require to base the investigation of the typical $k$-face on the incenter as a center function. The incenter of a $k$-dimensional convex body is the center of its largest inscribed $k$-dimensional ball, if that is unique. Since the latter is not always the case, some precautions are necessary. The inradius of a $k$-dimensional convex body is the radius of a largest $k$-dimensional ball contained in the body.

\begin{lemma}\label{L3.1}
Let $P\in\cP$ be a $d$-polytope with the property that its facet hyperplanes are in directionally general position. Then $P$ has a unique inball.
\end{lemma}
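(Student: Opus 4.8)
The plan is to translate the maximization defining the inball into a condition on the facet normals, and then to play a positive linear dependence among certain of these normals against directional general position by a dimension count.

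First I would represent $P=\bigcap_{i=1}^N H^-(u_i,\tau_i)$, where $u_1,\dots,u_N$ are the outer unit normals of the facets of $P$. A ball with center $c$ and radius $\rho$ is contained in $P$ precisely when $\tau_i-\langle c,u_i\rangle\ge\rho$ for all $i$, so the inradius equals $r:=\max_{c\in\R^d}\min_{1\le i\le N}(\tau_i-\langle c,u_i\rangle)$. The function $c\mapsto\min_i(\tau_i-\langle c,u_i\rangle)$ is concave and is negative outside the compact set $P$, hence it attains its (positive) maximum $r$ on $P$; thus an inball exists. I call a facet \emph{touched} by an inball with center $c$ if $\tau_i-\langle c,u_i\rangle=r$, and write $T(c)$ for the set of touched facets. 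The central step is a trapping property: for every inball center $c$ one has $o\in\conv\{u_i:i\in T(c)\}$. I would prove this by contraposition: if $o\notin\conv\{u_i:i\in T(c)\}$, a strictly separating vector $v$ satisfies $\langle u_i,v\rangle<0$ for all $i\in T(c)$, and then translating the center to $c+tv$ for small $t>0$ strictly increases $\tau_i-\langle c,u_i\rangle$ at every touched facet while leaving the finitely many untouched facets with distance still exceeding $r$; this produces an inscribed ball of radius larger than $r$, contradicting maximality.

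Now suppose, for a contradiction, that there are two distinct inball centers $c_1\ne c_2$, and set $e:=c_2-c_1\ne o$. Averaging the two systems of containment inequalities shows that the midpoint $c_m:=\tfrac12(c_1+c_2)$ is again an inball center, and that a facet touched at $c_m$ is touched at both $c_1$ and $c_2$; for such a facet $\langle c_1,u_i\rangle=\langle c_2,u_i\rangle=\tau_i-r$, whence $\langle e,u_i\rangle=0$. Consequently every normal $u_i$ with $i\in T(c_m)$ lies in the $(d-1)$-dimensional subspace $e^\perp$. By the trapping property, $o\in\conv\{u_i:i\in T(c_m)\}$, so there is a subfamily $T'\subseteq T(c_m)$ and coefficients $\lambda_i>0$ with $\sum_{i\in T'}\lambda_i u_i=o$; in particular the normals $(u_i)_{i\in T'}$ are linearly dependent. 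Directional general position means that any $d$ facet normals are linearly independent, which forces $|T'|\ge d+1$. But then $T'$ contains $d$ linearly independent vectors all lying in the $(d-1)$-dimensional space $e^\perp$, which is impossible. Hence the inball center is unique, and so is the inball.

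The main obstacle is the trapping property: showing that the facet normals touched by a \emph{largest} inscribed ball must contain $o$ in their convex hull. This is the only genuinely optimization-theoretic ingredient (equivalently, it is the optimality condition for maximizing the radius subject to the linear containment constraints); the care needed is to move the center so as to gain slack at all touched facets simultaneously while preserving feasibility at the untouched ones, which uses the finiteness of the facet family and the strict slack there. Everything after that is linear algebra driven by the dimension of $e^\perp$.
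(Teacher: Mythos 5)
Your proof is correct. Both arguments share the same optimization-theoretic core: your ``trapping property'' ($o\in\conv\{u_i:i\in T(c)\}$ for the touched normals, proved by a separating vector and a small translation of the center) is exactly the paper's opening step, phrased there in polar-cone language (if the positive hull $C$ of the touched normals contained no positive-dimensional subspace, a vector interior to the polar cone would let one translate the inball into the interior of $P$ and enlarge it). Where you diverge is the endgame. The paper pushes the cone argument further: the subspace $L\subseteq C$ is positively spanned by normals, hence contains $\dim L+1$ linearly dependent normals, so directional general position forces $\dim L=d$; the touched normals therefore positively span $\R^d$, which ``fixes the ball against translations'' and hence gives uniqueness. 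You instead assume two distinct inball centers $c_1\ne c_2$, observe that the midpoint is again a center whose touched normals are all orthogonal to $e=c_2-c_1$, and then get a contradiction from the trapping property: general position forces at least $d+1$ positively dependent touched normals, hence $d$ linearly independent vectors inside the $(d-1)$-dimensional space $e^\perp$. Your route buys two things the paper leaves implicit: it avoids the (mildly delicate) fact that a subspace contained in a finitely generated cone is positively spanned by generators lying in it, and it makes explicit the convexity step (the set of inball centers is convex) hidden in the paper's final ``fixed against translations, hence unique.'' The paper's version, in exchange, is shorter and yields the stronger structural statement that the normals of the facets touching the inball positively span $\R^d$.
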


\begin{proof}
Let $B$ be an inball of $P$. Let $u_1,\dots,u_k$ be the outer unit normal vectors of the facets of $P$ that touch $B$, and let $C$ be the positive hull of these vectors. If the closed convex cone $C$ does not contain a linear subspace of positive dimension, then there is a vector $t$ in the interior of the polar cone of $C$. This vector satisfies $\langle t,u_i\rangle<0$ for $i=1,\dots,k$ and hence $B+\varepsilon t \subset{\rm int}\,P$ for some $\varepsilon>0$, a contradiction. Thus, $C$ contains a subspace $L$ of dimension $k>0$. Since $L$ is positively spanned by normal vectors of $P$, it contains at least $k+1$ such vectors, and these are linearly dependent. This shows that $k=d$. But if the facets of $P$ touching $B$ have normal vectors which positively span $\R^d$, then these facets fix the ball $B$ against translations, hence it is the unique inball of $P$.
\end{proof}

The notions of general position immediately carry over to $(k-1)$-flats in a given $k$-dimensional affine subspace $L$, since after choosing an origin in $L$, we can view $L$ as a $k$-dimensional  real vector space. 

Let ${\bf u}=(u_1,\dots,u_{d+1})$ be a $(d+1)$-tuple of unit vectors in general position, that is, any $d$ of the vectors are linearly independent. For given $k\in \{1,\dots,d-1\}$, we define the $k$-dimensional subspace
$$ L_{\bf u} = \left({\rm lin}\{u_{k+2},\dots,u_{d+1}\}\right)^\perp.$$
We have orthogonal decompositions
$$ u_j=v_j+w_j\quad\mbox{with }v_j\in L_{\bf u},\,w_j\in L_{\bf u}^\perp,\; j=1,\dots,k+1.$$
Here $v_j\not= o$, since otherwise $u_j\in L_{\bf u}^\perp={\rm lin}\{u_{k+2},\dots,u_{d+1}\}$, a contradiction. We claim that any $k$ vectors of $v_1,\dots,v_{k+1}$ are linearly independent. Otherwise, some $k$ of them, say $v_1,\dots,v_k$, span a proper subspace $E$ of $L_{\bf u}$. But then $u_1,\dots,u_k,u_{k+2},\dots,u_{d+1}$ span only a proper subspace of $\R^d$, which is a contradiction.

Now we can prove the following lemma.

\begin{lemma}\label{L3.2}
Let $H_1,\dots,H_{d+1}$ be hyperplanes in general position. Let $k\in\{1,\dots,d-1\}$ and let $L:=\bigcap_{i=k+2}^{d+1} H_i$. Let $h_j:= H_j\cap L$ for $j=1,\dots,k+1$. Then $h_1,\dots,h_{k+1}$ are in general position with respect to $L$.
\end{lemma}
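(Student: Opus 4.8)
The plan is to treat the two aspects of general position separately: the directional part follows almost immediately from the discussion preceding the lemma, while the translational part is obtained by a direct counting argument carried out in the ambient space $\R^d$.

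First I would fix the set-up and dispose of the directional part. Since $H_1,\dots,H_{d+1}$ are in directional general position, their normal vectors $u_1,\dots,u_{d+1}$ are in general position in the sense used before the lemma, i.e.\ any $d$ of them are linearly independent. In particular $u_{k+2},\dots,u_{d+1}$ are $d-k$ linearly independent vectors, so $L=\bigcap_{i=k+2}^{d+1}H_i$ is nonempty and is a $k$-flat whose direction space is $(\lin\{u_{k+2},\dots,u_{d+1}\})^\perp=L_{\bf u}$. Choosing an origin $p\in L$ and writing a point of $L$ as $p+y$ with $y\in L_{\bf u}$, the defining equation $\langle p+y,u_j\rangle=\tau_j$ of $H_j$ becomes $\langle y,v_j\rangle=\tau_j-\langle p,u_j\rangle$, using $u_j=v_j+w_j$ with $w_j\perp L_{\bf u}$. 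Since $v_j\neq o$ was established before the lemma, this is the equation of a genuine $(k-1)$-flat in $L$ with normal vector $v_j$; thus each $h_j=H_j\cap L$ is a hyperplane of $L$ with normal $v_j$. Directional general position of $h_1,\dots,h_{k+1}$ in $L$ then asks exactly that any $k$ of $v_1,\dots,v_{k+1}$ be linearly independent, which is precisely the claim already proved in the paragraph preceding the lemma.

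For the translational part I would argue by transporting a coincidence in $L$ back to the original system. Let $j\in\{0,\dots,k-1\}$ and suppose some $j$-flat $F\subseteq L$ is contained in $m$ of the flats $h_{i_1},\dots,h_{i_m}$ with $i_1,\dots,i_m\in\{1,\dots,k+1\}$. Because $F\subseteq h_{i_\ell}\subseteq H_{i_\ell}$ and also $F\subseteq L\subseteq H_i$ for $i=k+2,\dots,d+1$, the flat $F$, which is a $j$-flat of $\R^d$, is contained in the $m+(d-k)$ distinct hyperplanes $H_{i_1},\dots,H_{i_m},H_{k+2},\dots,H_{d+1}$ (the two index blocks are disjoint). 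Translational general position of $H_1,\dots,H_{d+1}$ then forces $m+(d-k)\le d-j$, that is, $m\le k-j$. Hence every $j$-flat of $L$ lies in at most $k-j$ of the $h_i$, which is translational general position in $L$; combined with the directional part, this yields general position of $h_1,\dots,h_{k+1}$ with respect to $L$. The only genuinely delicate point is the opening reduction — verifying that each $h_j$ really is a $(k-1)$-flat (neither empty nor of lower dimension) and identifying its normal in $L$ as $v_j$, so that general position in $L$ matches the data $v_1,\dots,v_{k+1}$; once $v_j\neq o$ is in hand this is routine, and the translational counting step is then essentially automatic.
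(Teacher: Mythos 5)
Your proof is correct. The directional half coincides with the paper's argument: both identify $v_j$ as the normal of $h_j$ in $L$ (you via the explicit equation $\langle y,v_j\rangle=\tau_j-\langle p,u_j\rangle$ in coordinates on $L$, the paper by noting that $x,y\in h_j$ gives $\langle x-y,u_j\rangle=0$ and $\langle x-y,w_j\rangle=0$, hence $\langle x-y,v_j\rangle=0$), and both then invoke the claim established just before the lemma that any $k$ of $v_1,\dots,v_{k+1}$ are linearly independent. Where you genuinely diverge is the translational half. The paper disposes of it in one clause: the $h_j$ have no common point because $H_1,\dots,H_{d+1}$ have none. Taken literally this is not sufficient on its own; it works only in combination with the directional part just proved (for $m\le k$ of the flats, independence of their normals forces their intersection, if nonempty, to have dimension exactly $k-m$, so only the case of all $k+1$ flats sharing a point remains to be excluded), a step the paper leaves implicit. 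Your counting argument instead transports any coincidence back to $\R^d$: a $j$-flat of $L$ lying in $m$ of the $h_{i_\ell}$ lies in $m+(d-k)$ distinct hyperplanes of the original system, whence $m+(d-k)\le d-j$, i.e. $m\le k-j$, by translational general position of the $H_i$. This is self-contained, needs no input from the directional half, and makes fully explicit what the paper's one-line justification glosses over; the paper's route, in exchange, is shorter once the directional statement is in hand.
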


\begin{proof}
Let $u_i$ be a unit normal vector of $H_i$, for $i=1,\dots,d+1$. Then ${\bf u}=(u_1,\dots,u_{d+1})$ is in general position, and we define $L_{\bf u}$ and the vectors $v_1,\dots,v_{k+1}$ as above. The affine subspace $L$ is a translate of $L_{\bf u}$. Let $x,y\in h_j$. Then $\langle x-y,u_j\rangle =0$ and $\langle x-y,w_j\rangle =0$, hence $\langle x-y,v_j\rangle =0$. It follows that $v_j$ is a normal vector of $h_j$ with respect to $L$. Thus, $h_1,\dots,h_{k+1}$ are in directionally general position. They are also in translationally general position, since they do not have a common point, because $H_1,\dots,H_{d+1}$ do not have a common point. 
\end{proof}

The integral transform of the subsequent lemma is an extension of Theorem 7.3.2 in \cite{SW08} (and also a correction---a factor $2^{d+1}$ was missing there). The extension consists in admitting more general measures (not necessarily motion invariant) and using incenter and inradius of also lower-dimensional simplices. The lemma requires some preparations.

Let $H_1,\dots,H_{d+1}$ be hyperplanes in general position, and let $k\in\{1,\dots,d-1\}$. Let $u_i$ be a unit normal vector of $H_i$, for $i=1,\dots,d+1$. We define $L_{\bf u}$, $L$, $h_1,\dots,h_{k+1}$, and the vectors $v_1,\dots,v_{k+1}$ as above. Thus, the $k$-flat $L$, the $(k-1)$-flats $h_1,\dots,h_{k+1}$ and their normal vectors $v_1,\dots,v_{k+1}$ in $L_{\bf u}$ depend on $H_1,\dots,H_{d+1}$ and on the number $k$.

Since $h_1,\dots,h_{k+1}$ are in general position in $L$, they determine a unique $k$-simplex $\triangle(H_1,\dots,H_{d+1})\subset L$ such that $h_1,\dots,h_{k+1}$ are the facet hyperplanes of $\triangle$  (in $L$). We denote by $z(H_1,\dots,H_{d+1})$ its incenter and by $r(H_1,\dots,H_{d+1})$ its inradius. The dependence on $k$ is not shown in the notation, since $k$ is fixed.

We denote by ${\sf P}_k\subset (S^{d-1})^{d+1}$ the set of all $(d+1)$-tuples $(u_1,\dots,u_{d+1})$ in general position such that the vectors $v_1,\dots,v_{k+1}$ positively span the subspace $L_{\bf u}$.

For ${\bf u}=(u_1,\dots,u_{d+1})\in{\sf P}_k$  and for $z\in\R^d$ and $r\in\R^+$, we define
$$ t_j({\bf u},z,r):= \left\{\begin{array}{ll} \langle z,u_j\rangle + r\|u_j|L_{\bf u}\| & \mbox{if } j\in\{1,\dots,k+1\},\\[2mm]
\langle z, u_j\rangle  & \mbox{if } j\in\{k+2,\dots,d+1\},\end{array}\right.$$
where $u_j|L_{\bf u}$ is the image of $u_j$ under orthogonal projection to $L_{\bf u}$. The mapping 
$$ (z,r) \mapsto (t_1({\bf u},z,r),\dots,t_{d+1}({\bf u},z,r))$$ 
has Jacobian 
$$ D_k({\bf u}):= \det\left(\begin{array}{cccc} \langle u_1,e_1\rangle & \cdots & \langle u_1,e_d\rangle & \|u_1|L_{\bf u}\|\\ 
\vdots & & \vdots & \vdots\\
\langle u_{k+1},e_1\rangle & \cdots & \langle u_{k+1},e_d\rangle & \|u_{k+1}|L_{\bf u}\|\\
\langle u_{k+2},e_1\rangle & \cdots & \langle u_{k+2},e_d\rangle & 0\\
\vdots & & \vdots & \vdots\\
\langle u_{d+1},e_1\rangle & \cdots & \langle u_{d+1},e_d\rangle & 0
\end{array}\right),
$$
where $(e_1,\dots,e_d)$ is an orthonormal basis of $\R^d$. 

\begin{lemma}\label{L3.3}
Suppose that the spherical directional distribution $\varphi$ of $\widehat X$ is subspace-free. Let $k\in \{1,\dots,d\}$. If $f:\Ha^{d+1}\to\R$ is a nonnegative, measurable function, then
\begin{eqnarray*}
&& \int_{\Ha^{d+1}} f\,\D\widehat\Theta^{d+1}\\
&&=2^{k+1}\widehat\gamma^{d+1} \int_{{\sf P}_k} \int_{\R^d} \int_0^\infty f(H(u_1,t_1({\bf u},z,r)),\dots,H(u_{d+1}, t_{d+1}({\bf u},z,r)))\\
&&\hspace{4mm}\times\;\D r\,\lambda_d(z)D_k({\bf u})\,\varphi^{d+1}(\D{\bf u}).
\end{eqnarray*}
\end{lemma}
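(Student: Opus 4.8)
The plan is to prove Lemma \ref{L3.3} by a change-of-variables argument, starting from the basic integral-geometric formula \eqref{2.1} applied $d+1$ times. First I would use \eqref{2.1} to rewrite the integral $\int_{\Ha^{d+1}} f\,\D\widehat\Theta^{d+1}$ as
\begin{equation*}
\widehat\gamma^{d+1}\int_{(S^{d-1})^{d+1}}\int_{\R^{d+1}} f(H(u_1,\tau_1),\dots,H(u_{d+1},\tau_{d+1}))\,\D\tau_1\cdots\D\tau_{d+1}\,\varphi^{d+1}(\D{\bf u}).
\end{equation*}
Since $\varphi$ is subspace-free, Lemma \ref{L2.1}(b) (applied to the normal vectors) shows that $\varphi^{d+1}$-almost every ${\bf u}=(u_1,\dots,u_{d+1})$ is in general position, so I may restrict the outer integration to such tuples without changing the value. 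The key idea is then to hold ${\bf u}$ fixed and perform, for each such tuple, a change of variables in the inner integral over $(\tau_1,\dots,\tau_{d+1})\in\R^{d+1}$.

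Next I would set up the geometry of that substitution. For a fixed general-position tuple ${\bf u}$, the relation $(z,r)\mapsto(t_1({\bf u},z,r),\dots,t_{d+1}({\bf u},z,r))$ defined before the lemma maps a point $z\in\R^d$ and a radius $r$ to the offsets of the hyperplanes. The geometric meaning is exactly that $H(u_j,t_j)$ for $j=1,\dots,k+1$ is the hyperplane obtained by translating the supporting hyperplane through $z$ (with normal $u_j$) outward by $r$ measured \emph{within} $L_{\bf u}$ — this is why the shift is $r\|u_j|L_{\bf u}\|$ rather than $r$. The hyperplanes $H(u_j,t_j)$ for $j=k+2,\dots,d+1$ pass through $z$ and cut out the $k$-flat $L$ containing $z$; the remaining $k+1$ hyperplanes meet $L$ in the facet hyperplanes of a $k$-simplex whose incenter is $z$ (relative to $L$) and whose inradius is $r$. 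This is precisely the content of Lemma \ref{L3.2}, which guarantees that $h_1,\dots,h_{k+1}$ are in general position in $L$, so that the incenter and inradius are well defined. I would verify that, as $(z,r)$ ranges over $\R^d\times(0,\infty)$, the image $(\tau_1,\dots,\tau_{d+1})$ ranges over the set of offset vectors for which the associated simplex is nondegenerate, and that the map is a bijection onto that set.

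The Jacobian of this substitution is the determinant $D_k({\bf u})$ displayed before the lemma: differentiating $t_j$ with respect to the $d$ coordinates of $z$ gives the entries $\langle u_j,e_i\rangle$, and differentiating with respect to $r$ gives $\|u_j|L_{\bf u}\|$ for $j\le k+1$ and $0$ otherwise. The factor $2^{k+1}$ arises from orientation and absolute-value bookkeeping: the outward inradius offset can be taken with either sign of $u_j$ for the first $k+1$ normals only insofar as the incenter condition forces $z$ to lie on a fixed side, and the combinatorial multiplicity of which halfspaces the incenter sits in contributes $2^{k+1}$. I would make this precise by tracking, over the decomposition of $\R^{d+1}$ according to the signs determining the simplex, that each genuine simplex configuration is covered exactly $2^{k+1}$ times when one parametrizes by the full $(z,r)$-space together with the constraint ${\bf u}\in{\sf P}_k$ (the condition that $v_1,\dots,v_{k+1}$ positively span $L_{\bf u}$, which is exactly the inball-existence condition from Lemma \ref{L3.1} transported into $L$).

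The main obstacle I anticipate is the careful justification of the factor $2^{k+1}$ together with the claim that the image of the substitution is all of ${\sf P}_k\times\R^d\times(0,\infty)$ up to a null set — in other words, showing that almost every $(d+1)$-tuple of offsets $(\tau_1,\dots,\tau_{d+1})$ that yields a nondegenerate $k$-simplex in $L$ corresponds to exactly $2^{k+1}$ preimages under $(z,r)\mapsto(t_j)$. This requires relating the combinatorial choice of halfspaces (which side of each $h_j$ the incenter lies on) to the positive-spanning condition defining ${\sf P}_k$, and confirming that the missing factor noted in the remark before the lemma (the correction to Theorem 7.3.2 in \cite{SW08}) is accounted for here. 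Once the bijection-up-to-multiplicity and the Jacobian are established, substituting into the inner integral and applying Fubini to restore the outer $\varphi^{d+1}$-integration over ${\sf P}_k$ yields the stated formula.
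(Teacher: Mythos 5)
Your overall skeleton coincides with the paper's: apply (\ref{2.1}), interpret the map $(z,r)\mapsto(t_1({\bf u},z,r),\dots,t_{d+1}({\bf u},z,r))$ through the incenter and inradius of the induced $k$-simplex (Lemma \ref{L3.2}), and change variables with Jacobian $D_k({\bf u})$. The genuine gap sits exactly in the step you flag as the main obstacle, and the mechanism you propose for it would fail. Two of your claims are false. First, for fixed ${\bf u}\in{\sf P}_k$, the image of $\R^d\times(0,\infty)$ under $(z,r)\mapsto(t_j({\bf u},z,r))_j$ is \emph{not} the set of all offset vectors $(\tau_1,\dots,\tau_{d+1})$ yielding a nondegenerate simplex: it is only the half of that set on which $v_1,\dots,v_{k+1}$ are the \emph{outer} normals of the simplex; the other half (where $-v_1,\dots,-v_{k+1}$ are the outer normals) is the image of $r<0$. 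Second, for fixed ${\bf u}$ the decomposition of the inner $\tau$-space ``according to the signs determining the simplex'' has only \emph{two} nonempty pieces up to null sets, not $2^{k+1}$: the outer normals of a nondegenerate simplex must positively span $L_{\bf u}$, and if $\sum_j\lambda_j v_j=o$ is the essentially unique linear relation among $v_1,\dots,v_{k+1}$, then positive spanning of $(\epsilon_j v_j)_j$ forces $(\epsilon_j)_j=\pm({\rm sign}\,\lambda_j)_j$. Consequently no fixed-${\bf u}$ (fiberwise) argument can exhibit a $2^{k+1}$-fold covering; counted on hyperplane tuples, your parametrization over ${\sf P}_k\times\R^d\times(0,\infty)$ covers each tuple exactly $2^{d-k}$ times (the free signs of $u_{k+2},\dots,u_{d+1}$), and $2^{d-k+1}$ times if $r$ ranges over all of $\R$ --- not $2^{k+1}$ times. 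Moreover, your fixed-${\bf u}$ substitution says nothing about the mass of the left-hand side carried by the tuples ${\bf u}\notin{\sf P}_k$, whose $\tau$-fibers have full Lebesgue measure; that mass is precisely what the factor $2^{k+1}$ must account for.

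The factor $2^{k+1}$ can only arise from identifying $(u_j,\tau_j)$ with $(-u_j,-\tau_j)$ for $j\in\{1,\dots,k+1\}$, i.e., from moving between \emph{different} direction tuples ${\bf u}$; this uses both $H(u,\tau)=H(-u,-\tau)$ and the evenness of $\varphi$, and it is invisible once ${\bf u}$ has been fixed and restricted to ${\sf P}_k$. The paper therefore folds \emph{before} changing variables: it defines $A\subset(S^{d-1}\times\R)^{d+1}$ as the set of general-position tuples for which $v_1,\dots,v_{k+1}$ are outer normals of the induced simplex (this forces ${\bf u}\in{\sf P}_k$), shows that the sign-flip maps $T_{\mathcal E}$, ${\mathcal E}\in\{-1,1\}^{k+1}$, produce pairwise disjoint copies $T_{\mathcal E}(A)$ covering $(S^{d-1}\times\R)^{d+1}$ up to a null set (almost every tuple has a unique sign pattern), and uses $f'\circ T_{\mathcal E}=f'$ together with the $T_{\mathcal E}$-invariance of $(\varphi\otimes\lambda_1)^{d+1}$ to conclude that the full integral equals $2^{k+1}$ times the integral over $A$; only then does it parametrize $A$ bijectively by ${\sf P}_k\times\R^d\times(0,\infty)$ and apply the transformation formula fiberwise with Jacobian $D_k({\bf u})$. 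If you replace your multiplicity claim by this tiling argument, the rest of your outline goes through and matches the paper's proof.
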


\begin{proof}
In the following proof, we assume that $k\le d-1$. The proof for the case $k=d$ requires only the replacement of $L_{\bf u}$, $L$ by $\R^d$, of $v_i$ by $u_i$, and the obvious modifications. We have $D_d({\bf u}) = d!\Delta({\bf u})$, where $\Delta({\bf u})$ is the volume of the convex hull of $u_1,\dots,u_{d+1}$.

For $(u_i,\tau_i)\in S^{d-1}\times\R$, write
$$ f'((u_1,\tau_1),\dots,(u_{d+1},\tau_{d+1})) := f(H(u_1,\tau_1),\dots,H(u_{d+1},\tau_{d+1})).$$
According to (\ref{2.1}), we can write
$$ \int_{\Ha^{d+1}} f\,\D\widehat\Theta^{d+1} = \widehat\gamma^{d+1} \int_{(S^{d-1}\times\R)^{d+1}} f'\,\D(\varphi\otimes\lambda_1)^{d+1},$$
where $\lambda_1$ denotes Lebesgue measure on $\R$. Since $\varphi$ is subspace-free, $(\varphi\otimes\lambda_1)^{d+1}$-almost all $(d+1)$-tuples $((u_1,\tau_1),\dots,(u_{d+1},\tau_{d+1}))\in (S^{d-1}\times\R)^{d+1}$ have the property that the hyperplanes $H_i=H(u_i,\tau_i)$, $i=1,\dots,d+1$, determine a unique $k$-simplex $\triangle(H_1,\dots,H_{d+1})$ as above. The vectors $v_1,\dots,v_{k+1}$, defined as above, are the normal vectors of the facets of $\triangle$, either inner or outer normal vectors. We define $A$ as the set of all $((u_1,\tau_1),\dots,(u_{d+1},\tau_{d+1}))\in (S^{d-1}\times\R)^{d+1}$ for which the hyperplanes $H(u_1,\tau_1),\dots,H(u_{d+1},\tau_{d+1})$ are in general position and determine a $k$-simplex $\triangle$ that has the vectors $v_1,\dots,v_{k+1}$ as {\bf outer} normal vectors. For ${\mathcal E}=(\epsilon_1,\dots,\epsilon_{k+1})\in\{-1,1\}^{k+1}$, let $T_{\mathcal E}: (S^{d-1}\times\R)^{d+1}\to (S^{d-1}\times\R)^{d+1}$ be defined by
\begin{eqnarray*}
&&T_{\mathcal E}((u_1,\tau_1),\dots,(u_{d+1},\tau_{d+1}))\\
&& := ((\epsilon_1 u_1,\epsilon_1 \tau_1),\dots,(\epsilon_{k+1} u_{k+1},\epsilon_{k+1} \tau_{k+1}),(u_{k+2},\tau_{k+2}),\dots,(u_{d+1},\tau_{d+1})).
\end{eqnarray*}
Then the sets $T_{\mathcal E}(A)$, ${\mathcal E}\in \{-1,1\}^{k+1}$, are pairwise disjoint and cover $(S^{d-1}\times\R)^{d+1}$ up to a set of $(\varphi\otimes\lambda_1)$-measure zero. We have $f'\circ T_{\mathcal E}=f'$. Since the pushforward of $(\varphi\otimes\lambda_1)^{d+1}$ under $T_{\mathcal E}$ is the same measure (since $\varphi$ and $\lambda_1$ are even), we have
$$
\int_{(S^{d-1}\times\R)^{d+1}} f'\,\D(\varphi\otimes\lambda_1)^{d+1} = \sum_{\mathcal E} \int_{T_{\mathcal E}(A)} f'\,\D(\varphi\otimes\lambda_1)^{d+1}
 = 2^{k+1} \int_A f'\,\D(\varphi\otimes\lambda_1)^{d+1}.
$$
This gives 
\begin{eqnarray*}
\int_{\Ha^{d+1}} f\,\D\widehat\Theta^{d+1} &=&
\widehat\gamma^{d+1}  \int_{(S^{d-1}\times\R)^{d+1}} f'\,\D(\varphi\otimes\lambda_1)^{d+1} =  2^{k+1}\widehat\gamma^{d+1} \int_A f'\,\D(\varphi\otimes\lambda_1)^{d+1}\\
&=& 2^{k+1}\widehat\gamma^{d+1} \int_{{\sf P}_k}\int_{\R^{d+1}} f(H(u_1,\tau_1),\dots,H(u_{d+1},\tau_{d+1}))\\
&& \times\; {\mathbbm 1}_A((u_1,\tau_1),\dots,(u_{d+1},\tau_{d+1}))\,
\D(\tau_1,\dots,\tau_{d+1})\,\varphi^{d+1}(\D{\bf u}).
\end{eqnarray*}

Let $((u_1,\tau_1),\dots,(u_{d+1},\tau_{d+1}))\in A$. The hyperplanes $H(u_1,\tau_1),\dots,H(u_{d+1},\tau_{d+1})$ 
determine the simplex $\triangle$, with incenter $z$ and inradius $r$. Let $x_j\in L_{\bf u}$ be the point where $H(u_j,\tau_j)$ touches the inball of $\triangle$. Then
$$ \langle x_j-z,v_j/\|v_j\|\rangle = r$$
and
\begin{eqnarray*}
\tau_j &=& \langle x_j,u_j\rangle = \langle z,u_j\rangle + \langle x_j-z,u_j\rangle = \langle z,u_j\rangle + \langle x_j-z,v_j+w_j\rangle\\
&=& \langle z,u_j\rangle + \langle x_j-z,v_j\rangle =\langle z,u_j\rangle + r\|v_j\|\\
&=& t_j({\bf u}, z,r).
\end{eqnarray*}
Thus, we see that the mapping
$$ (z,r,{\bf u}) \mapsto ((u_1,t_1({\bf u},z,r)),\dots,(u_{d+1},t_{d+1}({\bf u},z,r)))$$
maps $\R^d\times\R_{>0}\times {\sf P}_k$ bijectively onto $A$. As mentioned, for fixed ${\bf u}\in{\sf P}_k$, the mapping 
$$ (z,r) \mapsto (t_1({\bf u},z,r),\dots,t_{d+1}({\bf u},z,r))$$ 
has Jacobian $D_k({\bf u})$. Therefore, the transformation formula for multiple integrals, applied for each fixed ${\bf u}\in{\sf P}_k$ to the inner integral, gives the assertion.
\end{proof}

\section{The distribution of the typical $k$-face}\label{sec4}

Our next aim is to extend \cite[Thm. 10.4.6]{SW08}, giving a representation for the distribution of the typical $k$-face $Z^{(k)}$ with respect to the incenter as a center function. 

For ${\bf u}\in {\sf P}_k$, we define
$$ T_k({\bf u}) := L_{\bf u}\cap\bigcap_{i=1}^{k+1}H^-(u_i,\|u_i|L_{\bf u}\|).$$
This is a $k$-dimensional simplex in the subspace $L_{\bf u}$, with incenter $o$ and inradius $1$.

For a $k$-dimensional affine subspace $L$, a point $z\in L$ and a number $r>0$, we define $B^o(L,z,r)$ as the relative interior (with respect to $L$) of the $k$-dimensional ball in $L$ with center $z$ and radius $r$.

Of the Poisson hyperplane process $\widehat X$ we assume that its spherical directional distribution is subspace-free. Then the hyperplanes of $\widehat X$ are a.s. in general position. For the typical $k$-face $Z^{(k)}$ of the tessellation $X$, we take the incenter as the center function $c$. This is possible by Lemmas \ref{L3.1} and \ref{L3.2}. Let $F\in X^{(k)}$ be a $k$-face of the tessellation $X$, and let $L$ be its affine hull. Then there are $d-k$ hyperplanes of $\widehat X$ whose intersection is $L$. They can be numbered in $(d-k)!$ ways. The face $F$ has a $k$-dimensional inball. It determines $k+1$ hyperplanes of $\widehat X$ whose intersections with $L$ touch the inball. These hyperplanes can be numbered in $(k+1)!$ ways. Therefore, (\ref{2.6}) gives, for $A\in\B(\K)$, the subsequent relation. In the following formulas, $L,\triangle, z,r$ are functions of the hyperplanes $H_1,\dots,H_{d+1}$ in general position (and the given number $k\in\{1,\dots,d-1\}$), namely
$$ L= \bigcap_{i=k+2}^{d+1} H_i,$$
$\triangle$ is the simplex in $L$ determined by $L\cap H_1,\dots,L\cap L_{k+1}$, the point $z$ is its incenter, and the number $r$ is its inradius. The dependence of the functions $L,\triangle, z,r$ on the hyperplanes $H_1,\dots,H_{d+1}$ is not shown in the following formulas, but has to be kept in mind. We recall that $H^-_x$ denotes the closed halfspace bounded by the hyperplane $H$ that contains $x$, provided that $x\notin H$.

We get by (\ref{2.6}), using the unit cube $C^d$ as the set $B$,
\begin{eqnarray*}
\gamma^{(k)}\bQ^{(k)}(A) &=& \bE \sum_{K\in X^{(k)}} {\mathbbm 1}(K-c(K)) {\mathbbm 1}_{C^d}(c(K))\\
&=& \frac{1}{(k+1)!(d-k)!} \,\bE\sum_{(H_1,\dots,H_{d+1})\in \widehat X^{d+1}_{\not=}} {\mathbbm 1} \left\{ \widehat X\cap[B^o(L,z,r)]_\Ha=\emptyset\right\} {\mathbbm 1}_{C^d}(z)\\
&& \times\, {\mathbbm 1}\left\{ \bigcap_{H\in\widehat X\setminus[B^o(L,z,r)]_\Ha} H_z^--z\in A\right\}.
\end{eqnarray*}
Application of the Mecke formula (Corollary 3.2.3 of \cite{SW08}) yields
\begin{eqnarray*}
\gamma^{(k)}\bQ^{(k)}(A) &=& \frac{1}{(k+1)!(d-k)!} \int_{\Ha^{d+1}} \bE{\mathbbm 1} \left\{ \widehat X\cap[B^o(L,z,r)]_\Ha=\emptyset\right\} {\mathbbm 1}_{C^d}(z)\\
&& \times\, {\mathbbm 1}\left\{ \bigcap_{H\in\widehat X\setminus[B^o(L,z,r)]_\Ha}\left(\triangle\cap H_z^-\right)-z\in A\right\} \widehat\Theta(\D(H_1,\dots,H_{d+1})).
\end{eqnarray*}

The processes $\widehat X\cap[B^o(L,z,r)]_\Ha$ and $\widehat X\setminus[B^o(L,z,r)]_\Ha$ are independent. Further, denoting by $L_o$ the subspace which is a translate of $L$ and by $B(L_o)$ the $k$-dimensional closed ball in $L_o$ with center $o$ and radius $1$, we have
$$
\bP\left(\widehat X\cap [B^o(L,z,r)]_\Ha =\emptyset\right) = \bP\left(\widehat X\cap[rB(L_o)]_\Ha=\emptyset\right)
= e^{-2\widehat\gamma r\Phi(B(L_o))}.
$$
Thus, we obtain
\begin{eqnarray*}
\gamma^{(k)}\bQ^{(k)}(A) &=& \frac{1}{(k+1)!(d-k)!} \int_{\Ha^{d+1}} e^{-2\widehat \gamma r\Phi(B(L_o))} {\mathbbm 1}_{C^d}(z)\\
&& \times\,\bP\left( \bigcap_{H\in\widehat X\setminus[B^o(L,z,r)]_\Ha}\left(\triangle\cap H_z^-\right)-z\in A\right)\widehat\Theta^{d+1}(\D(H_1,\dots,H_{d+1}))\\
&=&  \frac{1}{(k+1)!(d-k)!} \int_{\Ha^{d+1}} e^{-2\widehat\gamma r\Phi(B(L_o))} {\mathbbm 1}_{C^d}(z)\\
&& \times\,\bP\left( \bigcap_{H\in\widehat X\setminus[B^o(L_o,o,r)]_\Ha}\left((\triangle-z)\cap H_o^-\right)\in A\right)\widehat\Theta^{d+1}(\D(H_1,\dots,H_{d+1})),
\end{eqnarray*}
where in the last step the stationarity of $\widehat X$ was used. 

Now we apply Lemma \ref{L3.3}. We recall that $L,\triangle,z,r$ in the integrand above are functions of $H_1,\dots,H_{d+1}$. In particular, we have
$$ L_o(H(u_1,\tau_1),\dots,H(u_{d+1},\tau_{d+1}))=L_{\bf u},\quad {\bf u}=(u_1,\dots,u_{d+1}).$$
If we first denote (for fixed ${\bf u}=(u_1,\dots,u_{d+1})$) the integration variables of the inner integrals in Lemma \ref{L3.3} by $z'$ and $r'$, we have
$$ z(H(u_1,t_1({\bf u},z',r'),\dots)= z',$$
$$ r(H(u_1,t_1({\bf u},z',r'),\dots)= r',$$
$$ \triangle(H(u_1,t_1({\bf u},z',r'),\dots) -  z(H(u_1,t_1({\bf u},z',r'),\dots)= r'T_k({\bf u}).$$
Therefore, we obtain the following theorem.

\begin{theorem}\label{T4.1}
Suppose that the spherical directional distribution $\varphi$ of $\widehat X$ is subspace-free. Let $k\in\{1,\dots,d\}$. With respect to the incenter as center function, the distribution $\bQ^{(k)}$ of the typical $k$-face $Z^{(k)}$ of the tessellation $X$ induced by $\widehat X$ is given by
\begin{eqnarray*}
&&\bQ^{(k)}(A)\\
&& =\frac{2^{k+1}}{(k+1)!(d-k)!} \frac{\widehat\gamma^{d+1}}{\gamma^{(k)}} \int_{{\sf P}_k} \int_0^\infty e^{-2\widehat\gamma r\Phi(B(L_{\bf u}))} \,\bP \left(\bigcap_{H\in\widehat X \setminus [B^o(L_{\bf u},o,r)]_\Ha} H_o^-\cap r T_k({\bf u})\in A\right)\\
&& \hspace{4mm}\times\; \D r D_k({\bf u})\,\varphi^{d+1}(\D{\bf u})
\end{eqnarray*}
for $A\in\B(\K)$.
\end{theorem}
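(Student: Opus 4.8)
The plan is to start from the Campbell-type identity (\ref{2.6}) with the unit cube $C^d$ as the set $B$, and to rewrite the expected sum over $k$-faces as an expected sum over ordered $(d+1)$-tuples of distinct hyperplanes of $\widehat X$. Under the subspace-free assumption, Lemma \ref{L2.1} guarantees that $\widehat X$ is a.s. in general position, so Lemmas \ref{L3.1} and \ref{L3.2} apply and the incenter is a well-defined center function on the $k$-faces. Each $k$-face $F$ then determines a unique $(d+1)$-tuple up to ordering: the $d-k$ hyperplanes whose intersection is the affine hull $L$ of $F$ (orderable in $(d-k)!$ ways), together with the $k+1$ hyperplanes whose traces on $L$ touch the $k$-dimensional inball of $F$ (orderable in $(k+1)!$ ways). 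This accounts for the prefactor $1/[(k+1)!(d-k)!]$ and converts (\ref{2.6}) into an expected sum over $\widehat X^{d+1}_{\not=}$, in which one indicator enforces that no further hyperplane of $\widehat X$ meets the inball $B^o(L,z,r)$, so that $\triangle$ is genuinely the inball simplex of a face, while a second indicator records the cut-off cell $\bigcap_{H\in\widehat X\setminus[B^o(L,z,r)]_\Ha} H_z^- - z$, and ${\mathbbm 1}_{C^d}(z)$ localizes its incenter.

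Next I would apply the Mecke formula to extract the $d+1$ marked hyperplanes as an integration against $\widehat\Theta^{d+1}$, leaving an expectation over the original process $\widehat X$. The Poisson independence of the restrictions $\widehat X\cap[B^o(L,z,r)]_\Ha$ and $\widehat X\setminus[B^o(L,z,r)]_\Ha$ lets me factor the integrand: the void probability of the first is $e^{-\widehat\Theta([B^o(L,z,r)]_\Ha)}=e^{-2\widehat\gamma r\Phi(B(L_o))}$, using the hitting-functional identity $\widehat\Theta([K]_\Ha)=2\widehat\gamma\Phi(K)$ of Section \ref{sec2} together with the positive homogeneity and translation invariance of $\Phi$, while the second restriction carries the conditional probability that the cut-off cell lies in $A$. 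An application of the stationarity of $\widehat X$ then recenters everything at the origin, replacing $B^o(L,z,r)$ by $B^o(L_o,o,r)$ and $\triangle$ by $\triangle-z$.

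Finally I would invoke the integral transform of Lemma \ref{L3.3} to change variables in the remaining $\widehat\Theta^{d+1}$-integral from $(H_1,\dots,H_{d+1})$ to $({\bf u},z',r')\in{\sf P}_k\times\R^d\times\R_{>0}$. This produces the factor $2^{k+1}\widehat\gamma^{d+1}$ and the Jacobian $D_k({\bf u})$, and under it the functions of the hyperplanes become explicit: $L_o=L_{\bf u}$, the incenter equals $z'$, the inradius equals $r'$, and $\triangle-z=r'T_k({\bf u})$ by the normalisation of $T_k({\bf u})$ to incenter $o$ and inradius $1$. The indicator ${\mathbbm 1}_{C^d}(z')$ then integrates to $\lambda_d(C^d)=1$, removing the $z'$-integration and leaving exactly the asserted formula.

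I expect the main obstacle to be the first step: verifying that the correspondence between $k$-faces and $(d+1)$-tuples of hyperplanes is a genuine bijection up to the $(k+1)!(d-k)!$ orderings on the full-measure event of general position, and that the indicator conditions on $\widehat X\setminus[B^o(L,z,r)]_\Ha$ reconstruct precisely one face per admissible tuple. Everything afterwards is a mechanical combination of the Mecke formula, Poisson independence, the void-probability formula, stationarity, and Lemma \ref{L3.3}.
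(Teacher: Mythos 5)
Your proposal is correct and follows essentially the same route as the paper's own argument: starting from (\ref{2.6}) with the unit cube, converting the sum over $k$-faces into a sum over ordered $(d+1)$-tuples with the prefactor $1/[(k+1)!(d-k)!]$, then applying the Mecke formula, Poisson independence with the void probability $e^{-2\widehat\gamma r\Phi(B(L_o))}$, stationarity, and finally the change of variables of Lemma \ref{L3.3} with the indicator ${\mathbbm 1}_{C^d}$ integrating to one. The "main obstacle" you flag is handled in the paper exactly as you sketch it, via the a.s.\ general position guaranteed by Lemma \ref{L2.1} and the uniqueness statements of Lemmas \ref{L3.1} and \ref{L3.2}.
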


\section{The shape of small $k$-faces}\label{sec5}

As we want to find the limit distribution of the shape of small $k$-faces of the tessellation $X$, we need a measurement for size and a notion of shape. We consider $\cP_k$, the set of $k$-dimensional polytopes in $\R^d$, for a given number $k\in\{1,\dots,d\}$, and the subset  $\cP'_k$ of polytopes with a unique ($k$-dimensional) inball. A {\em size functional} for $\cP_k$ is a real function on $\K$ that is increasing under set inclusion, homogeneous of degree one, continuous (with respect to the Hausdorff metric), and positive on $k$-dimensional sets. The homogeneity of degree one is not a restriction against the formerly required homogeneity of some degree $m>0$, since a homogeneous functional $\psi$ of degree $m$ may be replaced by $\psi^{1/m}$. Examples of size functionals for $\cP_k$ are $V_k^{1/k}$, where $V_k$ is the $k$-dimensional volume, the mean width, the circumradius, or the diameter. On $\cP'_k$ we use the incenter as center function, denoted by $c$.

The {\em shape} of $P\in \cP'_k$, is obtained by normalization,
$$ \mathfrak{s}_c(P):= \frac{1}{\Phi(P)}(P-c(P)),$$
where $\Phi$ is the hitting functional defined in Section \ref{sec2}. The shape space $\cP_{k,c}$, defined by
$$ \cP_{k,c} = \mathfrak{s}_c(\cP'_k)=\{P\in\cP'_k: c(P)=o,\,\Phi(P)=1\},$$
is a Borel subset of $\cP$. By $f_r(P)$ we denote the number of $r$-dimensional faces of a polytope $P$.

\begin{theorem}\label{T5.1}
Suppose that the spherical directional distribution of $\widehat X$ is subspace-free. Let $k\in \{1,\dots,d\}$. Let $\Sigma$ be a translation invariant size functional for $\cP_k$. Then the typical cell $Z^{(k)}$ of the tessellation $X$ satisfies
$$ \lim_{a\to 0} \bP\left(\mathfrak{s}_c(Z^{(k)})\in S\mid \Sigma(Z^{(k)})<a\right) = \frac{\xi_{k,\varphi,\Sigma}(S)}{\xi_{k,\varphi,\Sigma}(\cP_{k,c})}$$
for $S\in\B(\cP_{k,c})$, where the measure $\xi_{k,\varphi,\Sigma}$ on $\B(\cP_{k,c})$ is defined by
$$ \xi_{k,\varphi,\Sigma} := \int_{{\sf P}_k} {\mathbbm 1}\{\mathfrak{s}_c(T_k({\bf u}))\in\cdot\} \frac{D_k({\bf u})}{\Sigma(T_k({\bf u}))}\,\varphi^{d+1}(\D{\bf u}).$$
In particular,
$$ \lim_{a\to 0} \bP\left(f_{k-1}(Z^{(k)})=k+1\mid \Sigma(Z^{(k)})<a\right)=1.$$
\end{theorem}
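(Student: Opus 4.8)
The plan is to start from the representation of $\bQ^{(k)}$ in Theorem \ref{T4.1} and extract the conditional distribution $\bP(\mathfrak{s}_c(Z^{(k)})\in S\mid \Sigma(Z^{(k)})<a)$ as a ratio of two integrals over ${\sf P}_k\times(0,\infty)$. The key observation is that in the formula from Theorem \ref{T4.1}, the typical $k$-face is realized as $\bigcap_{H}H_o^-\cap rT_k({\bf u})$, i.e.\ as the simplex $rT_k({\bf u})$ possibly cut down by further hyperplanes of $\widehat X$. As the size $a\to 0$, the conditioning forces $r\to 0$, and in that regime the probability that any of the cutting hyperplanes actually meets the tiny simplex tends to $1$ faster than the contribution of the uncut simplex; so the dominant contribution comes from the event that $Z^{(k)}=rT_k({\bf u})$ is a genuine (uncut) simplex. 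This is precisely what produces the concentration on simplex shapes and the final assertion $f_{k-1}=k+1$.

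First I would write, using Theorem \ref{T4.1} and the definition of $\mathfrak{s}_c$, an exact formula for the numerator $\gamma^{(k)}\bQ^{(k)}(\{P:\mathfrak{s}_c(P)\in S,\ \Sigma(P)<a\})$ as
$$
\frac{2^{k+1}}{(k+1)!(d-k)!}\,\widehat\gamma^{d+1}\int_{{\sf P}_k}\int_0^\infty e^{-2\widehat\gamma r\Phi(B(L_{\bf u}))}\,g_a(r,{\bf u},S)\,\D r\,D_k({\bf u})\,\varphi^{d+1}(\D{\bf u}),
$$
where $g_a(r,{\bf u},S)=\bP(\,\mathfrak{s}_c(Q)\in S,\ \Sigma(Q)<a\,)$ with $Q=\bigcap_H H_o^-\cap rT_k({\bf u})$ the (random) cut simplex. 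Next I would substitute $r=a\rho$ to expose the scaling: because $\Sigma$ is homogeneous of degree one and $\mathfrak{s}_c$ is scale invariant, $\Sigma(rT_k({\bf u}))=r\Sigma(T_k({\bf u}))$, and the uncut simplex $rT_k({\bf u})$ has shape $\mathfrak{s}_c(T_k({\bf u}))$ independent of $r$. The leading small-$a$ behavior of $g_a$ is obtained by replacing the exponential by $1$ (since $r\to 0$) and by showing that, conditionally, the cut simplex coincides with the full simplex with probability $\to 1$; what survives is $\mathbbm 1\{\mathfrak{s}_c(T_k({\bf u}))\in S\}\,\mathbbm 1\{r\Sigma(T_k({\bf u}))<a\}$.

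Then I would carry out the $r$-integration (equivalently the $\rho$-integration) of the surviving indicator. Since $\int_0^\infty \mathbbm 1\{r\Sigma(T_k({\bf u}))<a\}\,\D r = a/\Sigma(T_k({\bf u}))$, the leading term of the numerator is proportional to
$$
a\int_{{\sf P}_k}\mathbbm 1\{\mathfrak{s}_c(T_k({\bf u}))\in S\}\,\frac{D_k({\bf u})}{\Sigma(T_k({\bf u}))}\,\varphi^{d+1}(\D{\bf u})=a\,\xi_{k,\varphi,\Sigma}(S),
$$
with the same constant prefactor for both numerator and denominator. Taking the denominator to be the total mass (setting $S=\cP_{k,c}$) and forming the ratio, the prefactors and the common factor $a$ cancel, yielding $\xi_{k,\varphi,\Sigma}(S)/\xi_{k,\varphi,\Sigma}(\cP_{k,c})$ in the limit. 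Finally, since $\xi_{k,\varphi,\Sigma}$ is by construction concentrated on shapes of the simplices $T_k({\bf u})$, every such shape $P$ satisfies $f_{k-1}(P)=k+1$, which gives the last assertion immediately.

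The main obstacle I expect is making rigorous the claim that, in the limit $a\to 0$, the event that $Q$ is the uncut simplex dominates---i.e.\ controlling $g_a$ uniformly. Concretely, one must show that $\bP(Q\neq rT_k({\bf u}))$ is negligible in the relevant integral: the probability that some hyperplane of $\widehat X$ cuts off a corner of $rT_k({\bf u})$ while still leaving $\Sigma(Q)<a$ and altering the shape is of smaller order, because such a hyperplane must pass through a region of diameter $O(r)=O(a)$, an event of probability $O(a)$ by the hitting functional, and this extra factor of $a$ suppresses the non-simplex contributions relative to the leading $O(a)$ term. I would need a dominated-convergence argument, producing an integrable majorant on ${\sf P}_k$ uniform in small $a$ (using $e^{-2\widehat\gamma r\Phi(B(L_{\bf u}))}\le 1$ and integrability of $D_k({\bf u})/\Sigma(T_k({\bf u}))$ against $\varphi^{d+1}$, which follows since $\Sigma$ is bounded below on the compact set of simplex shapes and $D_k$ is bounded). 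Verifying this integrability and the uniform smallness of the cut-corner contribution is the technical heart of the argument.
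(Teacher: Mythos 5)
Your proposal follows essentially the same route as the paper's proof: apply Theorem \ref{T4.1}, isolate the event that no further hyperplane of $\widehat X$ hits $rT_k({\bf u})$ (equivalently, by the paper's observation, $f_{k-1}=k+1$), compute that contribution exactly so that the $r$-integration produces the factor $a/\Sigma(T_k({\bf u}))$ and hence the measure $\xi_{k,\varphi,\Sigma}$, show the cut (non-simplex) contribution is $o(a)$, and form the ratio. The paper organizes this by splitting on $\{f_{k-1}(Z^{(k)})=k+1\}$ versus $\{f_{k-1}(Z^{(k)})>k+1\}$, using $\bP\bigl(\widehat X([rT_k({\bf u})]_\Ha\setminus[B^o(L_{\bf u},o,r)]_\Ha)=0\bigr)=e^{-2\widehat\gamma r[\Phi(T_k({\bf u}))-\Phi(B(L_{\bf u}))]}$ and the mean value theorem for the inner integral, which matches your outline step by step.

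Two of your justifications, however, need repair, and both repairs are the paper's inball device. First, your claim that ``$\Sigma$ is bounded below on the compact set of simplex shapes'' rests on a false premise: that set is \emph{not} compact. The simplices $T_k({\bf u})$ have inradius $1$ but become arbitrarily elongated as ${\bf u}$ approaches the boundary of ${\sf P}_k$, and limits of their shapes can degenerate to lower-dimensional bodies, on which a size functional for $\cP_k$ need not be positive. The correct argument for $\inf_{{\bf u}\in{\sf P}_k}\Sigma(T_k({\bf u}))>0$ is the paper's: $B(L_{\bf u})\subset T_k({\bf u})$, monotonicity of $\Sigma$, and the fact that the family of unit $k$-balls $B(L)$ is compact, so the continuous, positive functional $\Sigma$ attains a positive minimum on it. Second, your primary mechanism for suppressing the cut contribution---a hyperplane must hit a region of size $O(r)$, ``an event of probability $O(a)$''---is only pointwise in ${\bf u}$: the hitting probability is $1-e^{-2\widehat\gamma r[\Phi(T_k({\bf u}))-\Phi(B(L_{\bf u}))]}$, whose linearization has the constant $\Phi(T_k({\bf u}))$, which is unbounded over ${\sf P}_k$; a uniform $O(a^2)$ bound would therefore require integrability of $\Phi(T_k({\bf u}))D_k({\bf u})$ against $\varphi^{d+1}$, which you do not establish and which is not needed. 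The paper instead bounds that probability by $1$, restricts $r$ to $(0,a/\Sigma(B(L_{\bf u})))$ via the same inball argument (the cut face still contains the $r$-inball, so $\Sigma(Z^{(k)})\ge r\Sigma(B(L_{\bf u}))$), and concludes with pointwise convergence to $0$ plus dominated convergence, the majorant being a constant multiple of the bounded function $D_k({\bf u})$. Your fallback dominated-convergence remark points in exactly this direction, so with these two corrections your argument coincides with the paper's proof.
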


\begin{proof}
Let $S\in\B(\cP_{k,c})$ and $a>0$ be given. We apply Theorem \ref{T4.1} to the set
$$ A=\{P\in\cP'_k:f_{k-1}(P) =k+1,\,\mathfrak{s}_c(P)\in S,\,\Sigma(P)<a\}.$$
We have
\begin{eqnarray*} 
&& f_{k-1}\left(\bigcap_{H\in\widehat X\setminus[B^o(L_{\bf u},o,r)]_\Ha} H_o^-\cap rT_k({\bf u})=k+1\right) \\
&& \Leftrightarrow \widehat X\left([rT_k({\bf u})]_\Ha \setminus[B^o(L_{\bf u},o,r)]_\Ha\right)=0
\end{eqnarray*}
and
\begin{eqnarray*}
&& \bP\left(\widehat X([rT_k({\bf u})]_\Ha \setminus[B^o(L_{\bf u},o,r)]_\Ha)=0\right)\\
&&=  \bP\left(\widehat X([rT_k({\bf u})]_\Ha \setminus[rB(L_{\bf u})]_\Ha)=0\right)\\
&&= e^{-2\widehat\gamma r[\Phi(T_k({\bf u}))-\Phi(B(L_{\bf u}))]}.
\end{eqnarray*}
Hence, from Theorem \ref{T4.1}  we get
\begin{eqnarray*}
&& \bP\left(f_{k-1}(Z^{(k)})=k+1,\, \mathfrak{s}_c(Z^{(k)})\in S,\,\Sigma(Z^{(k)})<a\right)\\
&&=  \frac{2^{k+1}}{(k+1)!(d-k)!} \frac{\widehat\gamma^{d+1}}{\gamma^{(k)}} \int_{{\sf P}_k} \int_0^\infty e^{-2\widehat \gamma r\Phi(T_k({\bf u}))} \,{\mathbbm 1}\{\Sigma(rT_k({\bf u}))<a\}\,\D r\\
&& \hspace{4mm}\times\; {\mathbbm 1}\{\mathfrak{s}_c(T_k({\bf u}))\in S\} D_k({\bf u})\,\varphi^{d+1}(\D{\bf u}).
\end{eqnarray*}
By the mean value theorem, the inner integral is equal to
$$ \frac{a}{\Sigma(T_k({\bf u}))}e^{-2\widehat\gamma r_0({\bf u})\Phi(T_k({\bf u}))}$$
with an intermediate value $r_0({\bf u})$ satisfying
$$ 0\le r_0({\bf u}) \le \frac{a}{\Sigma(T_k({\bf u}))} \le \frac{a}{\Sigma(B(L_{\bf u}))} \le Ca,$$
wnere $C$ is a constant independent of ${\bf u}$. Here we have used that $B(L_{\bf u})\subset T_k({\bf u})$ and that $\Sigma$ is increasing, finally that $\Sigma$ is continuous, hence ${\bf u}\mapsto\Sigma(B(L_{\bf u}))$ attains a minimum, which is positive by the assumptions on the size functional. It follows that
\begin{eqnarray}
&& \lim_{a\to 0} a^{-1} \bP\left(f_{k-1}(Z^{(k)})=k+1,\,\mathfrak{s}_c(Z^{(k)})\in S, \, \Sigma(Z^{(k)})<a\right)\nonumber\\
&& =  \frac{2^{k+1}}{(k+1)!(d-k)!} \frac{\widehat\gamma^{d+1}}{\gamma^{(k)}} \int_{{\sf P}_k}  {\mathbbm 1}\{\mathfrak{s}_c(T_k({\bf u}))\in S\}\frac{ D_k({\bf u})}{\Sigma(T_k({\bf u}))}\,\varphi^{d+1}(\D{\bf u}). \label{5.1}
\end{eqnarray}

Let $r_k(K)$ denote the inradius of a $k$-dimensional convex body $K\subset L_{\bf u}$. Then $r_k(K)B(L_{\bf u})+t\subseteq K$ for a suitable vector $t$, hence $r_k(K)\Sigma(B(L_{\bf u}))\le\Sigma(K)$. Thus,
$$ \Sigma(Z^{(k)})<a\Rightarrow r_k(Z^{(k)}) <\frac{a}{\Sigma(B(L_{\bf u}))}.$$
Therefore,
$$ \bP\left(f_{k-1}(Z^{(k)})>k+1,\,\Sigma(Z^{(k)})<a\right) \le \bP\left(f_{k-1}(Z^{(k)})>k+1,\, r_k(Z^{(k)})<\frac{a}{\Sigma(B(L_{\bf u}))}\right).$$
With this, Theorem \ref{T4.1} gives
\begin{eqnarray*}
&&\bP\left(f_{k-1}(Z^{(k)})>k+1,\,\Sigma(Z^{(k)})<a\right)\\
&& \le \frac{2^{k+1}}{(k+1)!(d-k)!} \frac{\widehat\gamma^{d+1}}{\gamma^{(k)}} \int_{{\sf P}_k} \int_0^\infty e^{-2\widehat\gamma r\Phi(B(L_{\bf u}))}\\
&& \hspace{4mm}\times\;\bP \left(f_{k-1}\left(\bigcap_{H\in\widehat X \setminus [B^o(L_{\bf u},o,r)]_\Ha} H_o^-\cap r T_k({\bf u})\right) > k+1\right)\\
&& \hspace{4mm}\times\;{\mathbbm 1}\left\{r<\frac{a}{\Sigma(B(L_{\bf u}))}\right\}\,\D r D_k({\bf u})\,\varphi^{d+1}(\D{\bf u}). 
\end{eqnarray*}
For each fixed ${\bf u}\in{\sf P}_k$, the probability in the integrand tends to zero as $r\to 0$. Therefore, applying the mean value theorem as above, we now obtain
\begin{equation}\label{5.2}
\lim_{a\to 0} a^{-1}\bP\left(f_{k-1}(Z^{(k)})>k+1,\,\Sigma(Z^{(k)})<a\right)=0.
\end{equation}
From relations (\ref{5.1}) and (\ref{5.2}), the assertions of Theorem \ref{T5.1} follow immediately.
\end{proof}

\noindent Author's address:\\[2mm]Rolf Schneider\\Mathematisches Institut, Albert-Ludwigs-Universit{\"a}t\\D-79104 Freiburg i. Br., Germany\\E-mail: rolf.schneider@math.uni-freiburg.de


\begin{thebibliography}{10}

\bibitem{BRT14} Beermann, M., Redenbach, C., Th\"ale, C., Asymptotic shape of small cells. {\em Math. Nachr.} {\bf 287} (2014), 737--747.

\bibitem{Bon17} Bonnet, G., Small cells in a Poisson hyperplane tessellation. {\em Adv. Appl. Math.} {\bf 95} (2018), 31--52.

\bibitem{HRS04a} Hug, D., Reitzner, M., Schneider, R.,  The limit shape of the zero cell in a stationary Poisson hyperplane tessellation.  \textit{Ann. Probab.} \textbf{32} (2004), 1140--1167.

\bibitem{HRS04b} Hug, D., Reitzner, M., Schneider, R., Large Poisson--Voronoi cells and Crofton cells. \textit{Adv. Appl. Prob. (SGSA)} \textbf{36} (2004), 667--690.

\bibitem{HS07a} Hug, D., Schneider, R., Asymptotic shapes of large cells in random tessellations. \textit{Geom. Funct. Anal.} \textbf{17} (2007), 156--191.
 
\bibitem{HS07b} Hug, D., Schneider, R., Typical cells in Poisson hyperplane tessellations. \textit{Discrete Comput. Geom.} \textbf{38} (2007), 305--319.  

\bibitem{HS10} Hug, D., Schneider, R.,  Large faces in Poisson hyperplane mosaics. {\em Ann. Probab.} {\bf 38} (2010), 1320--1344.

\bibitem{HS11} Hug, D., Schneider, R.,  Faces with given directions in anisotropic Poisson hyperplane mosaics. {\em Adv. Appl. Prob.} {\bf 43} (2011), 308--321.

\bibitem{Kov97} Kovalenko, I.N.,  A proof of a conjecture of David Kendall on the shape of random polygons of large area. (Russian) \textit{Kibernet. Sistem. Anal.} 1997, 3--10, 187; Engl. transl. \textit{Cybernet. Systems Anal. } \textbf{33} (1997), 461--467.

\bibitem{Kov99} Kovalenko, I.N.,  A simplified proof of a conjecture of D.G. Kendall concerning shapes of random polygons. \textit{J. Appl. Math. Stochastic Anal.} \textbf{12} (1999), 301--310.

\bibitem{Mil95} Miles, R.E., A heuristic proof of a long-standing conjecture of D.G. Kendall concerning the shapes of certain large random polygons. {\em Adv. Appl. Prob. (SGSA)} {\bf 27} (1995), 397--417.

\bibitem{RS16} Reitzner, M., Schneider, R., On the cells in a stationary Poisson hyperplane mosaic. {\em Adv. Geom.}, DOI 10.1515/advgeom-2018-0013; arXiv:1609.04230 (2016).

\bibitem{Sch18} Schneider, R., The polytopes in a Poisson hyperplane tessellation. (submitted), arXiv:1804.05622v1 (2018).

\bibitem{SW08} Schneider, R., Weil, W., {\em Stochastic and Integral Geometry.} Springer, Berlin, 2008.

\end{thebibliography}
\end{document}